\newtheorem*{multitheorem}{\variable@name}
\theoremstyle{definition}
\newcommand{\variable@name}{Theorem}
\newtheorem*{multiproclaim}{\variable@name}
\theoremstyle{plain}
\newtheorem{thm}{Theorem}
\newtheorem{cor}[thm]{Corollary}
\theoremstyle{definition}
\newtheorem{rem}[thm]{Remark}
\begin{document}
\title{An equivalent condition for the Markov triples and the Diophantine equation $a^{2}+b^{2}+c^{2}=abcf(a,b,c)$}
\author{Genki Shibukawa}
\date{
\small MSC classes\,:\,11D25, 11F20}
\pagestyle{plain}

\maketitle

\begin{abstract}
We propose an equivalent condition for the Markov triples, which was mentioned by H. Rademacher essentially.  
As an application, we mention the solvability of the Diophantine equation $a^{2}+b^{2}+c^{2}=abcf(a,b,c)$. 
\end{abstract}
Throughout the paper, we denote the set of non-negative integers by $\mathbb{Z}_{\geq 0}$, 
the ring of integers by $\mathbb{Z}$. 
Markov triples $(a,b,c)$ are positive integer solutions of the Markov Diophantine equation
\begin{equation}
\label{eq:the Markov equation}
a^{2}+b^{2}+c^{2}=3abc.
\end{equation}
If $(a,b,c)$ is a Markov triple, then $a,b,c$ are pairwise prime. 
Therefore, in this note, we assume that $a$, $b$, and $c$ are pairwise prime positive integers unless otherwise specified. 
The key condition as follows: 
\begin{equation}
\label{eq:Markov condition}
a^{2}+b^{2}\equiv 0 \quad (\mathrm{mod}\,c), \quad 
b^{2}+c^{2}\equiv 0 \quad (\mathrm{mod}\,a), \quad 
c^{2}+a^{2}\equiv 0 \quad (\mathrm{mod}\,b).
\end{equation}
\begin{thm}
\label{thm:an equivalent condition}
\quad $(\ref{eq:the Markov equation}) \quad \Leftrightarrow \quad (\ref{eq:Markov condition})$.
\end{thm}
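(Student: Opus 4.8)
The plan is to prove both implications, with the forward direction essentially immediate and the reverse direction carrying all the weight.

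For $(\ref{eq:the Markov equation}) \Rightarrow (\ref{eq:Markov condition})$ I would simply reduce $a^{2}+b^{2}+c^{2}=3abc$ modulo each of $a$, $b$, $c$ in turn: modulo $c$ the terms $3abc$ and $c^{2}$ vanish, leaving $a^{2}+b^{2}\equiv 0 \pmod c$, and symmetrically for the other two congruences.

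For the converse $(\ref{eq:Markov condition}) \Rightarrow (\ref{eq:the Markov equation})$, the first step is to combine the three congruences with the standing assumption that $a,b,c$ are pairwise coprime. Each congruence together with $x\mid x^{2}$ gives $a\mid a^{2}+b^{2}+c^{2}$, $b\mid a^{2}+b^{2}+c^{2}$, and $c\mid a^{2}+b^{2}+c^{2}$; pairwise coprimality then yields $abc\mid a^{2}+b^{2}+c^{2}$. Hence there is a positive integer $k$ with $a^{2}+b^{2}+c^{2}=k\,abc$, and the whole problem reduces to proving $k=3$.

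The crux is the bound $k\le 3$, which I would obtain by Vieta jumping (descent). Among all pairwise coprime positive solutions of $a^{2}+b^{2}+c^{2}=k\,abc$ with the given value of $k$ — this set is nonempty since it contains $(a,b,c)$, and $k=(a^{2}+b^{2}+c^{2})/(abc)$ is invariant along the jumping orbit — choose one minimizing $a+b+c$ and order it as $a\ge b\ge c$. Viewing the equation as a quadratic in $a$, the second root $a'=kbc-a=(b^{2}+c^{2})/a$ is again a positive integer and $(a',b,c)$ is again a solution; moreover $\gcd(a',b)=\gcd(a,b)$ and $\gcd(a',c)=\gcd(a,c)$, so coprimality is preserved and minimality forces $a'\ge a$, i.e.\ $b^{2}+c^{2}\ge a^{2}$. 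Since $c\le b$ this gives $a\le \sqrt2\,b$, and then
\[
k=\frac a{bc}+\frac b{ac}+\frac c{ab}\le \sqrt2+1+1<4,
\]
so $k\in\{1,2,3\}$. I expect this descent step — arranging that coprimality survives the jump and that the minimality inequality delivers $a\le\sqrt2\,b$ — to be the main obstacle.

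It remains to exclude $k=1$ and $k=2$ using coprimality directly. For $k=1$, a case analysis of $a^{2}+b^{2}+c^{2}=abc$ modulo $3$ (squares being $0$ or $1$) forces $3$ to divide each of $a,b,c$, contradicting pairwise coprimality. For $k=2$, reducing $a^{2}+b^{2}+c^{2}=2abc$ modulo $2$ shows $a+b+c$ is even, so (since coprimality permits at most one even entry) exactly two of $a,b,c$ are odd and one is even; reducing modulo $4$ then gives $a^{2}+b^{2}+c^{2}\equiv 2$ while $2abc\equiv 0$, a contradiction. Therefore $k=3$, which is exactly $(\ref{eq:the Markov equation})$.
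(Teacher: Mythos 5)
Your proof is correct, but it takes a genuinely different route from the paper. The paper never introduces your quantity $k$: it observes that each congruence in (\ref{eq:Markov condition}) is equivalent, via the zero condition for Dedekind sums, to the vanishing of a Rademacher sum ($b^{2}+c^{2}\equiv 0 \,(\mathrm{mod}\,a) \Leftrightarrow D(a;b,c)=0$), and then feeds $D(a;b,c)=D(b;c,a)=D(c;a,b)=0$ into Rademacher's reciprocity law
$$
D(a;b,c)+D(b;c,a)+D(c;a,b)=\frac{a^{2}+b^{2}+c^{2}}{3abc}-1,
$$
so the Markov equation drops out in one line, with the constant $3$ appearing as the structural constant of the reciprocity formula. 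Your argument replaces this machinery with elementary number theory: pairwise coprimality upgrades the three congruences to $abc \mid a^{2}+b^{2}+c^{2}$, Vieta jumping on a minimal pairwise coprime solution gives $k=(a^{2}+b^{2}+c^{2})/(abc) \le 2+\sqrt{2}<4$, and the cases $k=1$ and $k=2$ die modulo $3$ and modulo $4$ respectively. Each step checks out: the jump $a'=kbc-a=(b^{2}+c^{2})/a$ is a positive integer, coprimality survives because $a'\equiv -a$ modulo $b$ and modulo $c$, so minimality legitimately forces $b^{2}+c^{2}\ge a^{2}$ and hence the bound; and both exclusions genuinely need coprimality (e.g.\ $(3,3,3)$ solves the $k=1$ equation), which you correctly invoke. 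What the paper's route buys is brevity and a conceptual explanation of where the $3$ comes from, at the cost of citing two nontrivial external results; your route is longer but fully self-contained and elementary, and it makes explicit the intermediate fact that the congruences alone pin the ratio $(a^{2}+b^{2}+c^{2})/(abc)$ to one of $1,2,3$ before coprimality eliminates the first two.
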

\begin{proof}
$\Rightarrow )$ It is obvious. \\
$\Leftarrow  )$ Let $D(a;b,c)$ be the Dedekind (Rademacher) sums 
$$
D(a;b,c)
   :=\frac{1}{a}\sum_{k=1}^{a-1}\cot{\left(\frac{\pi bk}{a}\right)}\cot{\left(\frac{\pi ck}{a}\right)}
$$
and $s(a;b)$ be the usual Dedekind sum 
$$
s(a;b)
   :=
   \frac{1}{4}D(a;b,1)
   =
   \frac{1}{4}D(a;1,b).
$$
For $b^{\prime}$ that satisfies $bb^{\prime}\equiv 1\,\,(\mathrm{mod}\,a)$, we have
$$
D(a;b,c)
   =D(a;1,b^{\prime}c)
   =4s(a;b^{\prime}c). 
$$
Thus, by the zero condition for $s(a;b)$ \cite{2} p28
$$
b^{2}+1\equiv 0 \quad (\mathrm{mod}\,a) \quad \Leftrightarrow \quad s(a;b)=0,
$$
we obtain the zero condition for $D(a;b,c)$, that is
\begin{equation}
\label{eq:zero condition of D}
b^{2}+c^{2}\equiv 0 \quad (\mathrm{mod}\,a) \quad \Leftrightarrow \quad D(a;b,c)=0.
\end{equation}
From the assumption (\ref{eq:Markov condition}) and zero condition (\ref{eq:zero condition of D}), we derive
$$
D(a;b,c)=D(b;c,a)=D(c;a,b)=0.
$$
Finally, by the reciprocity law of $D(a;b,c)$ \cite{1}
$$
D(a;b,c)+D(b;c,a)+D(c;a,b)=\frac{a^{2}+b^{2}+c^{2}}{3abc}-1,
$$
the triple $(a,b,c)$ satisfies the Markov Diophantine equation (\ref{eq:the Markov equation}). 
\end{proof}
For any polynomial $f(a,b,c) \in \mathbb{Z}[a,b,c]$, we consider the Diophantine equation
\begin{equation}
\label{eq:the Diophantine equation}
a^{2}+b^{2}+c^{2}=abcf(a,b,c).
\end{equation}
\begin{thm}
\label{thm:unsolve thm}
The pairwise prime positive integer triple $(a,b,c)$ is a solution of the Diophantine equation (\ref{eq:the Diophantine equation}) if and only if $(a,b,c)$ is a Markov triple and satisfies $f(a,b,c)=3$.
\end{thm}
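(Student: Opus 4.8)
The plan is to leverage Theorem~\ref{thm:an equivalent condition} to reduce the statement to an almost mechanical comparison of two equations. The key observation is that the defining equation (\ref{eq:the Diophantine equation}) automatically encodes the congruence condition (\ref{eq:Markov condition}), independently of the particular polynomial $f$, because its right-hand side is a multiple of each of $a$, $b$, and $c$.

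For the ($\Leftarrow$) direction I would simply substitute: if $(a,b,c)$ is a Markov triple with $f(a,b,c)=3$, then $a^{2}+b^{2}+c^{2}=3abc=abc\,f(a,b,c)$, so (\ref{eq:the Diophantine equation}) holds at once. For the ($\Rightarrow$) direction, suppose the pairwise prime positive triple $(a,b,c)$ satisfies (\ref{eq:the Diophantine equation}). I would first reduce the equation modulo each variable in turn. Since $abc\,f(a,b,c)$ is divisible by $a$, reducing modulo $a$ gives $b^{2}+c^{2}\equiv 0\ (\mathrm{mod}\,a)$, and by symmetry $c^{2}+a^{2}\equiv 0\ (\mathrm{mod}\,b)$ and $a^{2}+b^{2}\equiv 0\ (\mathrm{mod}\,c)$. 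This is exactly the hypothesis (\ref{eq:Markov condition}). Applying Theorem~\ref{thm:an equivalent condition} then yields the Markov equation (\ref{eq:the Markov equation}), so $(a,b,c)$ is a Markov triple.

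It then remains to pin down $f$. Having both $a^{2}+b^{2}+c^{2}=3abc$ from Theorem~\ref{thm:an equivalent condition} and $a^{2}+b^{2}+c^{2}=abc\,f(a,b,c)$ by assumption, I would equate the right-hand sides to get $3abc=abc\,f(a,b,c)$ and cancel the nonzero factor $abc$, giving $f(a,b,c)=3$.

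I expect no genuine obstacle here; the proof is short precisely because Theorem~\ref{thm:an equivalent condition} does the heavy lifting. The only point worth stating carefully is why the congruence condition falls out for free: it is the product structure of the right-hand side of (\ref{eq:the Diophantine equation}) that makes (\ref{eq:Markov condition}) automatic, so that the polynomial $f$ plays no role in verifying the hypothesis of Theorem~\ref{thm:an equivalent condition}. The final cancellation is legitimate because $a,b,c$ are positive integers, hence $abc\neq 0$, and the pairwise primality assumed in the statement is precisely what is needed to invoke Theorem~\ref{thm:an equivalent condition}.
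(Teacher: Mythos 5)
Your proof is correct and follows essentially the same route as the paper: the divisibility of the right-hand side $abc\,f(a,b,c)$ by each of $a,b,c$ yields the congruence condition (\ref{eq:Markov condition}), Theorem \ref{thm:an equivalent condition} then gives the Markov equation, and cancelling $abc\neq 0$ pins down $f(a,b,c)=3$, with the converse by direct substitution. Your only addition is spelling out the reduction modulo each variable, which the paper leaves implicit.
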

\begin{proof}
If the Diophantine equation (\ref{eq:the Diophantine equation}) has a pairwise prime positive integer solution $(a,b,c)$, then $(a,b,c)$ satisfies (\ref{eq:Markov condition}). 
Therefore, from Theorem \ref{thm:an equivalent condition}, we have
$$
abcf(a,b,c)=a^{2}+b^{2}+c^{2}=3abc.
$$
By $abc\not=0$, we have $f(a,b,c)=3$. 

On the other hand, if there exists a Markov triple $(a,b,c)$ such that $f(a,b,c)=3$, then  
$$
a^{2}+b^{2}+c^{2}=3abc=abcf(a,b,c).
$$
\end{proof}
\begin{rem}
Theorem \ref{thm:an equivalent condition} was mentioned by Rademacher \cite{1} Part III Lecture 32 essentially. 
However, it seems that no one mention Theorem \ref{thm:unsolve thm}. 
\end{rem}
From Theorem \ref{thm:unsolve thm}, we obtain the following results. 
\begin{cor}
Let $k$ be the greatest common divisor of $a$, $b$, and $c$;
$$
k:=\gcd(a,b,c).
$$
{\rm{(1)}} If $k$ is not $1$ or $3$,then the Diophantine equation (\ref{eq:the Diophantine equation}) has no positive integer solutions.\\
{\rm{(2)}} If $k=1$,then for any non-zero polynomial $g(a,b,c) \in \mathbb{Z}_{\geq 0}[a,b,c]$ the Diophantine equation 
\begin{equation}
\label{eq:modifeid g1}
a^{2}+b^{2}+c^{2}=abc(3+g(a,b,c))
\end{equation}
has no positive integer solutions.\\
{\rm{(3)}} If $k=3$,then for any polynomial $g(a,b,c)\not\equiv 1 \in \mathbb{Z}_{\geq 0}[a,b,c]$ the Diophantine equation 
\begin{equation}
\label{eq:modifeid g3}
a^{2}+b^{2}+c^{2}=abcg(a,b,c)
\end{equation}
has no positive integer solutions.
\end{cor}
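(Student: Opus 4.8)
The plan is to funnel all three parts through a single identity, namely that every positive integer solution $(a,b,c)$ of (\ref{eq:the Diophantine equation}) satisfies $k\,f(a,b,c)=3$, where $k=\gcd(a,b,c)$; the three assertions then fall out from elementary positivity estimates on polynomials with non-negative coefficients. First I would set $a=ka'$, $b=kb'$, $c=kc'$ with $\gcd(a',b',c')=1$, substitute into (\ref{eq:the Diophantine equation}), and cancel the common factor $k^{2}$ to obtain
$$
a'^{2}+b'^{2}+c'^{2}=k\,a'b'c'\,f(a,b,c).
$$
The crucial step — and the one I expect to be the main obstacle — is to promote $\gcd(a',b',c')=1$ to the stronger statement that $a',b',c'$ are \emph{pairwise} coprime, since only then does Theorem \ref{thm:an equivalent condition} apply. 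For $d=\gcd(a',b')$ the displayed equation gives $d\mid c'^{2}$ (every other term is divisible by $d$); any prime dividing both $d$ and $c'$ would divide all of $a',b',c'$, so $\gcd(d,c')=1$, forcing $d=1$, and symmetrically for the remaining pairs.

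Once pairwise coprimality is secured, put $m:=k\,f(a,b,c)\in\mathbb{Z}$. Positivity of the left side forces $m\ge 1$, and reducing the displayed equation modulo $a'$, $b'$, and $c'$ in turn shows that $(a',b',c')$ satisfies the three congruences (\ref{eq:Markov condition}). By Theorem \ref{thm:an equivalent condition} the pairwise coprime triple $(a',b',c')$ is then a Markov triple, i.e. $a'^{2}+b'^{2}+c'^{2}=3\,a'b'c'$; comparing with the displayed equation yields $m=3$, that is $k\,f(a,b,c)=3$.

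With this identity the three parts are immediate. For (1), $k\notin\{1,3\}$ means $k\nmid 3$, which contradicts $k\,f(a,b,c)=3$, so no solution of the prescribed $\gcd$ exists. For (2) I take $f=3+g$ and $k=1$, so the identity forces $g(a,b,c)=0$; but a non-zero polynomial in $\mathbb{Z}_{\geq 0}[a,b,c]$ evaluated at positive integers is at least $1$, a contradiction. For (3) I take $f=g$ and $k=3$, so the identity forces $g(a,b,c)=1$; here $a,b,c$ are positive multiples of $3$, hence each $\ge 3$, and I would split into the case where $g$ is constant (then $g\not\equiv 1$ makes the value $\neq 1$) and the case where $g$ carries a non-constant monomial (whose value is already $\ge 3$, so $g(a,b,c)\ge 3$), both contradicting $g(a,b,c)=1$. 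The only delicate point in the whole argument is the pairwise-coprimality reduction above; after that, Theorem \ref{thm:an equivalent condition} supplies all the arithmetic and the remaining steps are just the monotonicity of non-negative-coefficient polynomials on the positive integers.
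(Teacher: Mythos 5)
Your proof is correct and takes essentially the same route as the paper: reduce by $k=\gcd(a,b,c)$, upgrade $\gcd(a',b',c')=1$ to pairwise coprimality, derive the key identity $k\,f(a,b,c)=3$, and finish each part by positivity of polynomials with non-negative coefficients evaluated at positive integers. The only cosmetic difference is that you invoke Theorem \ref{thm:an equivalent condition} directly (in effect inlining the proof of Theorem \ref{thm:unsolve thm}) where the paper applies Theorem \ref{thm:unsolve thm} to the reduced equation; your treatment of the pairwise-coprimality step and of the final positivity estimates is, if anything, more explicit than the paper's.
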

\begin{proof}
We point out if coprime positive integers $a,b$ and $c$ satisfy the Diophantine equation (\ref{eq:the Diophantine equation}) then $a,b,c$ are pairwise prime. 
In fact, since 
$$
a\equiv 0 \,(\mathrm{mod}\,\gcd(b,c)),\quad
b\equiv 0 \,(\mathrm{mod}\,\gcd(c,a)),\quad
c\equiv 0\, (\mathrm{mod}\,\gcd(a,b)),
$$
we have 
$$
\gcd(b,c)=\gcd(c,a)=\gcd(a,b)=\gcd(a,b,c)=1.
$$
Therefore, if $(a,b,c)$ is a positive integer solution of (\ref{eq:the Diophantine equation}), then there exist pairwise prime integers $a_{0}$, $b_{0}$ and $c_{0}$ such that 
\begin{equation}
\label{eq:reduction of solution}
a=ka_{0},\,\,b=kb_{0},\,\,c=kc_{0}.
\end{equation}
{\rm{(1)}} By substituting (\ref{eq:reduction of solution}) to (\ref{eq:the Diophantine equation}), we have 
\begin{equation}
\label{eq:k reduction}
a_{0}^{2}+b_{0}^{2}+c_{0}^{2}
   =
   a_{0}b_{0}c_{0}kf(ka_{0},kb_{0},kc_{0}).
\end{equation}
From Theorem \ref{thm:unsolve thm}, if $(a,b,c)$ is a positive integer solution then 
\begin{equation}
\label{eq:deform eq}
kf(ka_{0},kb_{0},kc_{0})=3.
\end{equation}
Since $f(ka_{0},kb_{0},kc_{0})$ is a integer, $k=1$ or $3$ for the equation (\ref{eq:deform eq}) to have a solution.\\
{\rm{(2)}} For any non-zero polynomial $g(a,b,c)$ and positive integers $(a,b,c)$, $3+g(a,b,c)$ is greater than 3.     
Hence, from Theorem \ref{thm:unsolve thm}, (\ref{eq:modifeid g1}) has no integer solution.\\
{\rm{(3)}} By substituting $k=3$ to (\ref{eq:k reduction})
$$
a_{0}^{2}+b_{0}^{2}+c_{0}^{2}
   =
   a_{0}b_{0}c_{0}3f(3a_{0},3b_{0},3c_{0})
$$
and applying Theorem \ref{thm:unsolve thm}, we have 
$$
a_{0}^{2}+b_{0}^{2}+c_{0}^{2}
   =
   a_{0}b_{0}c_{0}3f(3a_{0},3b_{0},3c_{0}) \quad \Leftrightarrow \quad f(3a_{0},3b_{0},3c_{0})=1.
$$
On the other hand for any polynomial $g(a,b,c)$ with non-negative integer coefficients, $g(3a_{0},3b_{0},3c_{0})=1$ for positive integers $a_{0}$, $b_{0}$ and $c_{0}$ if and only if $g(a,b,c)\equiv 1$. 
\end{proof}


\bibliographystyle{amsplain}

\noindent 
Department of Mathematics, Graduate School of Science, Kobe University, \\
1-1, Rokkodai, Nada-ku, Kobe, 657-8501, JAPAN\\
E-mail: g-shibukawa@math.kobe-u.ac.jp

\end{document}